\newtheorem{Theorem}{\bf Theorem}
\newtheorem{lemma}[Theorem]{\bf Lemma}
\newtheorem{proposition}[Theorem]{\bf Proposition}
\newtheorem{theorem}[Theorem]{\bf Theorem}
\newcommand{\spmat}[1]{%
  \left[\begin{matrix}#1\end{matrix}\right]%
}
     \title[Indecomposable integrally closed modules of rank 2]{Note on indecomposable integrally closed modules of rank 2 over two-dimensional regular local rings}
     \author{Futoshi Hayasaka}
     \address{Department of Environmental and Mathematical Sciences, Okayama University, 3-1-1
Tsushimanaka, Kita-ku, Okayama, 700-8530, Japan}
     \email{hayasaka@okayama-u.ac.jp}
     \thanks{The first named author was partially supported by JSPS KAKENHI Grant Number JP20K03535.}
     \author{Vijay Kodiyalam}
     \address{The Institute of Mathematical Sciences, Chennai, India and Homi Bhabha National Institute, Mumbai, India}
     \email{vijay@imsc.res.in}
     \thanks{The second named author would like to thank Srikanth Iyengar and Radha Mohan for reigniting his interest in integrally closed modules.}
     \keywords{integral closure, indecomposable module, two-dimensional regular local ring, determinantal criterion}
     \subjclass[2020]{Primary 13B22,13H05}
\begin{document}
     \begin{abstract}
     We characterise ideals in two-dimensional regular local rings that arise as ideals of maximal minors of indecomposable integrally closed modules of rank two.
     \end{abstract}
     \maketitle

    The goal of this short note is to give a definitive answer to the question as to what ideals can arise as the ideal of maximal minors of an indecomposable integrally closed module of rank two over a two-dimensional regular local ring. A related question was first raised in \cite{Kdy1995}. For monomial ideals, a large class of indecomposable integrally closed module of rank two was constructed in \cite{Hys2020} whose associated ideals of maximal minors included non-simple integrally closed ideals. This work was extended in \cite{Hys2021} to integrally closed modules of larger rank.
Both these recent papers deal with monomial ideals and with existence results. The main purpose of this note is to remove this restriction on monomiality and to explore non-existence results. We however restrict attention only to modules of rank two where we have a satisfying characterisation and plan to explore the higher rank case in a future publication.

Throughout this note $(R,{\mathfrak m})$ will be a two-dimensional regular local ring with infinite residue field and $M$ will be a finitely-generated torsion-free $R$-module. We will assume familiarity with the basic results on finitely generated integrally closed $R$-modules from \cite{Kdy1995}, some of which we now recall. If $M$ is a finitely-generated torsion-free $R$-module, then $F = M^{**}$ is a free $R$-module and we will always regard $M$ as a submodule of $F$ generated by the columns of a suitable matrix. The ideal of maximal 
minors of this matrix will be denoted by $I(M)$ and is an ${\mathfrak m}$-primary 
ideal of $R$ (or the whole of $R$ if $M$ is itself free). Recall that the order of an ideal $I$ of $R$ is the largest non-negative integer $k$ such that $I \subseteq {\mathfrak m}^k$. We define  $ord(M)$ to be $ord(I(M))$.
Integrally closed modules satisfy the numerical equation $\mu(M) = ord(M) + rk(M)$ with $\mu(\cdot)$ and $rk(\cdot)$ denoting the minimal number of generators and rank respectively. We will also need to use the classical factorisation theorem of Zariski for integrally closed ideals in two-dimensional regular local rings - for a modern treatment of this, see
\cite{Lpm1988}.

Observe that when $M$ is indecomposable (as a direct sum of modules), it certainly has no free summand and is consequently contained in ${\mathfrak m}F$. Equivalently, in matrix terms, all entries of a matrix whose columns generate $M$ lie in ${\mathfrak m}$, leading to the first restriction on $I(M)$ for such a module, namely, $ord(I(M)) \geq 2$. This condition is definitely not sufficient for an integrally closed ideal to be the ideal of minors of an indecomposable integrally closed $R$-module of rank two, as our first lemma shows.

\begin{lemma}\label{msquare} The ideal ${\mathfrak m}^2$ is not $I(M)$ for any indecomposable integrally closed $R$-module $M$ of rank two.
\end{lemma}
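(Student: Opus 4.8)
The plan is to argue by contradiction and collapse the problem to a dimension count on the leading forms of the generators, with the numerical equation playing against Cramer's rule. Suppose $M$ were indecomposable, integrally closed, of rank two with $I(M)=\mathfrak{m}^2$. Since $\mathrm{ord}(\mathfrak{m}^2)=2$, the numerical equation forces $\mu(M)=\mathrm{ord}(M)+rk(M)=4$, so writing $F=M^{**}=R^2$ we may present $M$ by a $2\times 4$ matrix with columns $c_1,\dots,c_4$, all of whose entries lie in $\mathfrak{m}$ (indecomposability excludes a free summand, so $M\subseteq\mathfrak{m}F$). The first input I would record is Cramer's rule in the form $I(M)\,F\subseteq M$: multiplying each $2\times 2$ submatrix $[\,c_i\mid c_j\,]$ by its adjugate shows $\det[\,c_i\mid c_j\,]\cdot e_k\in\langle c_i,c_j\rangle\subseteq M$ for each basis vector $e_k$, whence $\mathfrak{m}^2F=I(M)\,F\subseteq M\subseteq\mathfrak{m}F$.

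Next I would pass to the associated graded picture. Put $W=\mathfrak{m}/\mathfrak{m}^2$, so that $\mathfrak{m}F/\mathfrak{m}^2F\cong W\oplus W$ is four-dimensional and $\mathfrak{m}^2/\mathfrak{m}^3\cong\mathrm{Sym}^2 W$ is three-dimensional. Because $\mathfrak{m}^2F\subseteq M\subseteq\mathfrak{m}F$, the quantity $d:=\dim_k(M/\mathfrak{m}^2F)$ equals the dimension of the span $U$ of the leading forms $v_1,\dots,v_4\in\mathfrak{m}F/\mathfrak{m}^2F$ of the generators. The key observation is that the leading form of the minor $\det[\,c_i\mid c_j\,]$ in $\mathfrak{m}^2/\mathfrak{m}^3$ is an alternating bilinear expression in $v_i,v_j$, hence the image of $v_i\wedge v_j$ under a natural map $\wedge^2(\mathfrak{m}F/\mathfrak{m}^2F)\to\mathfrak{m}^2/\mathfrak{m}^3$. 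Since $I(M)=\mathfrak{m}^2$ means the six minors generate $\mathfrak{m}^2$, their leading forms must span the three-dimensional space $\mathfrak{m}^2/\mathfrak{m}^3$; as these leading forms all lie in the image of $\wedge^2 U$, this forces $\binom{d}{2}\ge 3$, i.e. $d\ge 3$. With $d\le 4$ forced by the ambient dimension, only $d=3$ and $d=4$ survive.

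The endgame is to eliminate both. If $d=4$, then $M/\mathfrak{m}^2F$ exhausts $\mathfrak{m}F/\mathfrak{m}^2F$, so $M=\mathfrak{m}F\cong\mathfrak{m}\oplus\mathfrak{m}$, which is visibly decomposable — a contradiction. If $d=3$, I would relabel so that $v_1,v_2,v_3$ form a basis of $U$; then $v_1\wedge v_2,\,v_1\wedge v_3,\,v_2\wedge v_3$ span $\wedge^2 U$, so the three minors of $[\,c_1\,c_2\,c_3\,]$ already have leading forms spanning $\mathfrak{m}^2/\mathfrak{m}^3$, giving $I(\langle c_1,c_2,c_3\rangle)=\mathfrak{m}^2$ by Nakayama. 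Applying Cramer's rule to this $2\times 3$ matrix yields $\mathfrak{m}^2F\subseteq\langle c_1,c_2,c_3\rangle$, and since $v_1,v_2,v_3$ already span $U=M/\mathfrak{m}^2F$ we conclude $M=\langle c_1,c_2,c_3\rangle$, so $\mu(M)\le 3$, contradicting $\mu(M)=4$.

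I expect the main obstacle to be the bookkeeping in the middle step: pinning down that the leading form of each $2\times 2$ minor is genuinely the image of the corresponding wedge $v_i\wedge v_j$, so that the generation statement $I(M)=\mathfrak{m}^2$ translates faithfully (via Nakayama) into the spanning condition on $\wedge^2 U$ and hence into the inequality $d\ge 3$. Once that dictionary is fixed, both remaining cases close immediately, and it is worth emphasizing that integral closedness is used \emph{only} through the numerical equation $\mu(M)=4$ — which is precisely the hypothesis that the $d=3$ case is engineered to violate.
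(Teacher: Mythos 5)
Your argument is correct, but it takes a genuinely different route from the paper's. The paper's proof is a two-line application of the determinantal criterion: since every entry of a matrix for $M$ lies in ${\mathfrak m}$, any determinant formed from a vector $\left[\begin{smallmatrix} x \\ 0\end{smallmatrix}\right]$ or $\left[\begin{smallmatrix} 0 \\ x\end{smallmatrix}\right]$ (with $x \in {\mathfrak m}$) and a column of $M$ lands in ${\mathfrak m}^2 = I(M)$, so these vectors are integral over $M$ and hence in $M$; thus ${\mathfrak m}F \subseteq M \subseteq {\mathfrak m}F$ and $M = {\mathfrak m}\oplus{\mathfrak m}$ at once. You instead use integral closedness only through the numerical equation $\mu(M) = 4$, and then run a purely linear-algebraic case analysis on $d = \dim_k(M/{\mathfrak m}^2F)$ via the alternating map $\wedge^2({\mathfrak m}F/{\mathfrak m}^2F) \to {\mathfrak m}^2/{\mathfrak m}^3$; all your individual steps check out (Cramer's rule gives $I(M)F \subseteq M$, the images of the minors in ${\mathfrak m}^2/{\mathfrak m}^3$ do factor through $\wedge^2 U$, and Nakayama converts spanning of leading forms into generation). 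What your approach buys is a formally weaker hypothesis --- the conclusion holds for \emph{any} rank-two module with no free summand, $I(M) = {\mathfrak m}^2$ and $\mu(M) = 4$, with no appeal to the integral-dependence criteria --- at the cost of being considerably longer; what the paper's approach buys is brevity and consistency with the determinantal-criterion toolkit it deploys throughout (e.g.\ in Proposition \ref{nonexis}, where the same trick shows $JF \subseteq M$).
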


Before we begin the proof, we draw the reader's attention to the determinantal criterion for integral dependence which we will use several times in this paper. The version that we use is the following. With $M \subseteq F$ as before, an element $v \in F$ (thought of as a column vector) is in the integral closure of the module $M$ iff the determinant of any matrix with one column $v$ and the rest of the columns in $M$ is integral over $I(M)$. This follows from the more general statement of Rees in \cite{Res1987}.

\begin{proof}[Proof of Lemma \ref{msquare}]
Suppose $M$ is integrally closed of rank two without free direct summands and that $I(M) = {\mathfrak m}^2$. With $F = M^{**}$, as usual, $F \cong R^2$ and $M \subseteq {\mathfrak m}F$. Since all entries of a matrix for $M$ lie in ${\mathfrak m}$, the determinantal criterion shows that for any $x \in{\mathfrak m}$, the vectors
$$
\left[
\begin{array}{c}
x\\
0
\end{array}
\right]
{\text {~and~}}
\left[
\begin{array}{c}
0\\
x
\end{array}
\right]
$$
are integral over $M$ and hence are in $M$. Thus $M \supseteq {\mathfrak m}F$ as well and so $M = {\mathfrak m}F = {\mathfrak m} \oplus {\mathfrak m}$ and therefore decomposable.
\end{proof}

While $ord(I) \geq 2$ is not sufficient for an integrally closed ideal to be the ideal of minors of an indecomposable 
integrally closed $R$-module of rank two, it turns out that $ord(I) \geq 3$ is such a sufficient condition. Indeed, in the monomial case, this is one of the main results of \cite{Hys2020}.

\begin{proposition}\label{order3}
Let $I$ be an integrally closed ${\mathfrak m}$-primary ideal of $R$ with $ord(I) \geq 3$. Then, there exists an indecomposable 
integrally closed $R$-module $M$ of rank two with $I(M) = I$.
\end{proposition}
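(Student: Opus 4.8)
The plan is to produce the module in two stages: first build a convenient \emph{seed} module $M_0 \subseteq F = R^2$ of rank two with no free summand and with $\overline{I(M_0)} = I$, and then replace it by its integral closure $M = \overline{M_0}$. The passage to the closure is harmless for our purposes: recalling from the theory of \cite{Kdy1995} that the integral closure of a module does not change the integral closure of its ideal of maximal minors, and that the ideal of maximal minors of an integrally closed module is itself integrally closed, we obtain $I(M) = \overline{I(M)} = \overline{I(M_0)} = I$. Moreover $ord(I) = r \geq 3 \geq 1$ forces every entry of a matrix for $M$ to lie in $\mathfrak m$ (a unit entry would split off a free summand and give $ord(I(M)) = 0$), so $M \subseteq \mathfrak m F$ and $M$ has no free summand. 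It therefore remains to arrange the seed correctly and, crucially, to guarantee that the resulting $M$ is \emph{indecomposable}.

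For the seed I would start from a minimal system of generators $f_0, f_1, \dots, f_r$ of $I$; there are exactly $r+1$ of them, since the numerical equation applied to the rank-one integrally closed module $I$ gives $\mu(I) = ord(I) + 1 = r+1$. I would then assemble these into a $2 \times (r+2)$ matrix in a generic staircase pattern, the direct analogue (with general coefficients in place of monomials) of the matrices used in the monomial case in \cite{Hys2020}, chosen so that the $2 \times 2$ minors visibly generate an ideal contained in $I$ and containing a reduction of $I$. That $\overline{I(M_0)} = I$ is then checked by an order computation together with Zariski's factorisation theorem: one verifies that $I(M_0)$ and $I$ have the same integral closure by matching their behaviour along the relevant valuations.

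To compute $M = \overline{M_0}$ I would use the determinantal criterion recalled above: a vector $v \in F$ lies in $M$ exactly when $\det[\,v \mid w\,] \in I$ for every $w \in M_0$. The point at which the hypothesis $ord(I) \geq 3$ enters, and the precise place where the argument must differ from the order-two situation of Lemma \ref{msquare}, is in showing that the closure does \emph{not} swallow all of $\mathfrak m F$. Indeed, for $v = x e_i$ with $x \in \mathfrak m$ the relevant determinants lie a priori only in $\mathfrak m \cdot \mathfrak m = \mathfrak m^2$, which is \emph{not} contained in $I$ once $r \geq 3$; so, unlike in the proof of Lemma \ref{msquare}, we retain enough room to keep $M$ strictly inside $\mathfrak m F$. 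Verifying that the seed is generic enough for this failure of containment to occur in the right directions is the technical core of the closure computation.

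The main obstacle is indecomposability. If $M$ were decomposable of rank two it would split as $M \cong J_1 \oplus J_2$ with each $J_i$ torsion-free of rank one, hence isomorphic to an ideal; since integral closure commutes with direct sums, each $J_i$ is integrally closed, and $J_i \subseteq \mathfrak m$ because $M \subseteq \mathfrak m F$. Then $I = I(M) = J_1 J_2$ would be a nontrivial product of integrally closed ideals of positive order. Consequently, when $I$ is \emph{simple} in the sense of Zariski this cannot happen, and \emph{every} rank-two integrally closed $M$ with no free summand and $I(M) = I$ is automatically indecomposable, so the construction above already finishes that case. When $I$ is not simple, decomposable realisations do exist and one must show that the \emph{specific} seed chosen produces an $M$ that does not split. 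I would establish this by proving that $\mathrm{End}_R(M)$ has no nontrivial idempotent, equivalently that no change of basis of $F$ exhibits a minimal generator of $M$ supported in a single coordinate, using the genericity built into the staircase seed to entangle the two coordinates. This is the step I expect to be the most delicate, and it is where the freedom granted by $ord(I) \geq 3$ is genuinely required.
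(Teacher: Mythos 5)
Your proposal is a plan rather than a proof, and the two places where it stops short are precisely the places where all the work lives. First, the seed is never actually constructed: a ``generic staircase pattern'' with unspecified coefficients, to be checked by ``matching behaviour along the relevant valuations,'' is not something one can verify. The paper's seed is much more economical and completely explicit: take a minimal reduction $(a,b)$ of $I$, choose $c \in I$ with ${\mathfrak m}c \subseteq (a,b)$, and let $M$ be the integral closure of the module generated by the columns of the \emph{transposed Hilbert--Burch presentation matrix} of $(a,b,c)$. This needs only a $2\times 3$ matrix (not $2 \times (r+2)$), automatically gives $I(M) = \overline{(a,b,c)} = I$, and --- crucially --- gives the extra invariant $I_1(M) = {\mathfrak m}$, which your construction does not track and which drives the rest of the argument.

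Second, and more seriously, indecomposability in the non-simple case is exactly the heart of the proposition (e.g.\ $I = {\mathfrak m}^3$ satisfies the hypothesis and is very far from simple), and your proposal explicitly defers it: ``prove $\mathrm{End}_R(M)$ has no nontrivial idempotent \dots this is the step I expect to be the most delicate.'' That is an acknowledgement of the gap, not a closing of it; there is no reason given why genericity of the staircase coefficients would entangle the coordinates, and no mechanism by which $ord(I)\geq 3$ enters that argument. The paper avoids proving that any single construction is always indecomposable. Instead it runs a dichotomy: if the first module $M$ decomposes, then Lemma \ref{decom} (using $I_1(M)={\mathfrak m}$ and $ord(I)\geq 3$) forces $M \cong {\mathfrak m}\oplus J$ with $J=(I:{\mathfrak m})$ and $I={\mathfrak m}J$; one then builds a \emph{second} explicit module $M'$ from $J$ and a general pair $x,y$ generating ${\mathfrak m}$, verifies its integral closedness by the equational and determinantal criteria, and kills decomposability concretely --- any surjection $M' \to {\mathfrak m}$ would be left multiplication by a row vector $[s\ t]$ with image inside $(sy+tx,{\mathfrak m}^2) \neq {\mathfrak m}$. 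Your observation that a simple $I$ makes indecomposability automatic is correct (it is the same idea the paper uses in Proposition \ref{otherorder2}), but it covers only a thin slice of the order $\geq 3$ case.
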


The proof of this proposition needs a consequence of the equational criterion for integral dependence. Recall that, in this context, the equational criterion - see \cite{Res1987} - states the following. For $M \subseteq F$ as usual, regard elements of $F$ as linear forms in $r$-variables $X_1,\cdots,X_r$ where $r = rk(F)$.
Let $S$ be the $R$-subalgebra of $R[X_1,\cdots,X_r]$ generated by the elements of $M$ (regarded as linear forms).
An element $F$ is integral over $M$ iff the corresponding linear form is integral over $S$. The consequence we will need is that if an element  $v$ of $F$ is integral over $M$, which is generated by the columns of some matrix, then for each $k$, the $k^{th}$ entry $v_k$ of $v$ is integral over the ideal generated by the $k^{th}$ row of the matrix. This follows by taking the equation for integral dependence of the corresponding linear form over $S$ and setting all variables except $X_k$ to $0$.

We will also need a couple of preparatory lemmas. The ideals $I_1(M)$ and $I_2(M)$ in the next two lemmas refer to the ideals generated by the entries and $2 \times 2$ minors of a matrix whose columns generate $M$. Thus for a module of rank two, $I_2(M) = I(M)$.

\begin{lemma}\label{order2}
Let $I$ be an integrally closed ${\mathfrak m}$-primary ideal of $R$ with $ord(I) \geq 2$. There exists an integrally closed $R$-module $M$ of rank two such that $I_2(M) = I$ and $I_1(M) = {\mathfrak m}$.
\end{lemma}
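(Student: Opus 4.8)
The plan is to first reduce to constructing a not-necessarily-integrally-closed module and then pass to its integral closure. Concretely, I would build a rank-two submodule $M_0 \subseteq \mathfrak{m}F$ (with $F = R^2$) satisfying $I_2(M_0) = I$ and $I_1(M_0) = \mathfrak{m}$, and then set $M = \overline{M_0}$. Two general facts make the closure harmless. First, by the consequence of the equational criterion recalled just above the proposition, every entry of a vector in $\overline{M_0}$ is integral over the ideal generated by the corresponding row of a matrix for $M_0$; since each such row ideal sits inside $\mathfrak{m}$ and $\mathfrak{m}$ is integrally closed, all entries of $\overline{M_0}$ lie in $\mathfrak{m}$, so $I_1(M) \subseteq \mathfrak{m}$, while $I_1(M) \supseteq I_1(M_0) = \mathfrak{m}$ gives equality. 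Second, passing to the integral closure of a module enlarges the ideal of minors only within its own integral closure, i.e. $I(M) \subseteq \overline{I(M_0)}$; since for rank two $I(M_0) = I_2(M_0) = I$ is already integrally closed, this forces $I \subseteq I(M) \subseteq \overline{I(M_0)} = I$, hence $I(M) = I$. Thus the whole lemma rests on producing $M_0$.

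For the construction, fix a regular system of parameters $x,y$ so that $\mathfrak{m} = (x,y)$, write $r = ord(I) \geq 2$, and choose generators $z_1, \dots, z_t$ of $I$, each lying in $\mathfrak{m}^r$. Since $\mathfrak{m}^r = \mathfrak{m}\cdot\mathfrak{m}^{r-1}$, I can write $z_k = x p_k + y q_k$ with $p_k, q_k \in \mathfrak{m}^{r-1}$. I then let $M_0$ be generated by the columns $\spmat{x \\ y}$ and $\spmat{-q_k \\ p_k}$ for $k = 1, \dots, t$. By construction the $2 \times 2$ minor formed from $\spmat{x \\ y}$ and $\spmat{-q_k \\ p_k}$ equals $x p_k + y q_k = z_k$, so $I \subseteq I_2(M_0)$; and since the entries already include $x$ and $y$ while all remaining entries lie in $\mathfrak{m}$, one obtains $I_1(M_0) = \mathfrak{m}$ at once. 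The module has rank two because $I \neq 0$.

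The main obstacle is the reverse inclusion $I_2(M_0) \subseteq I$: besides the minors $z_k$, the matrix also produces the mixed minors $w_{kl} = p_k q_l - p_l q_k$, and these must be shown to lie in $I$. Here the key computation is that $\mathfrak{m}\,w_{kl} \subseteq I$, which follows from the two identities $x\,w_{kl} = q_l z_k - q_k z_l$ and $y\,w_{kl} = p_k z_l - p_l z_k$, both manifestly in $I$; thus each $w_{kl} \in (I : \mathfrak{m})$. Promoting ``$w_{kl} \in (I:\mathfrak{m})$'' to ``$w_{kl} \in I$'' is the delicate point, and is where I expect the real work to lie. I would handle it by exploiting the freedom in the decomposition $z_k = x p_k + y q_k$ (replacing $(p_k, q_k)$ by $(p_k + y s_k,\, q_k - x s_k)$ leaves $z_k$ fixed but changes $w_{kl}$ by the controlled element $-x(p_k s_l - p_l s_k) + y(q_l s_k - q_k s_l) \in \mathfrak{m}^{2r-2}$), together with Zariski's factorisation theorem, reducing the verification to the case of simple integrally closed ideals in suitably generic coordinates, where the leading-form structure of the $z_k$ forces the mixed minors into $I$. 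Once $I_2(M_0) \subseteq I$ is established we have $I_2(M_0) = I$ and $I_1(M_0) = \mathfrak{m}$, and the integral-closure step above completes the proof.
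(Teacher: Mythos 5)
Your overall framework---construct a matrix module $M_0$ with the right ideals of minors and then pass to its integral closure, using the row-ideal consequence of the equational criterion to keep $I_1$ inside $\mathfrak{m}$ and the identity $I(\overline{M_0})=\overline{I(M_0)}$ to control the ideal of maximal minors---is sound, and it is in fact the same outer strategy as the paper's. The gap is in the construction of $M_0$ itself. For the mixed minors $w_{kl}=p_kq_l-p_lq_k$ you establish only that $w_{kl}\in (I:\mathfrak{m})\cap\mathfrak{m}^{2r-2}$, and this does not place them in $I$, nor even in $\overline{I}=I$. For example, $I=(x^3,xy,y^3)$ is integrally closed of order $2$, and $x^2\in (I:\mathfrak{m})\cap\mathfrak{m}^{2}$ while $x^2\notin I$; so the constraints you derive are compatible with $I_2(M_0)\supsetneq I$ and even with $\overline{I_2(M_0)}\supsetneq I$, which would break the closure step as well. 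The paragraph proposing to repair this---re-choosing the decompositions $z_k=xp_k+yq_k$ and invoking Zariski factorisation in generic coordinates---is a statement of intent rather than an argument; as you yourself say, that is where the real work lies, and it is not done. The proof is therefore incomplete at its central point.

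The paper sidesteps this difficulty by not attempting to realise all of $I$ as an ideal of minors of an explicit matrix. It chooses a minimal reduction $(a,b)$ of $I$ and one further element $c\in I\setminus (a,b)$ with $\mathfrak{m}c\subseteq (a,b)$, so that $(a,b,c)$ is a minimally $3$-generated reduction of $I$. The Hilbert--Burch resolution of $(a,b,c)$ then supplies a $2\times 3$ matrix (the transposed syzygy matrix) whose ideal of $2\times 2$ minors is exactly $(a,b,c)$: a $2\times 3$ matrix has only three maximal minors and Hilbert--Burch accounts for all of them, so there are no extraneous mixed minors to control; moreover the condition $\mathfrak{m}c\subseteq (a,b)$ forces the row corresponding to $c$ to generate $\mathfrak{m}$, giving $I_1=\mathfrak{m}$. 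Taking the integral closure and using $I(\overline{N})=\overline{I(N)}=\overline{(a,b,c)}=I$ finishes. If you want to salvage your construction, note that you do not need $I_2(M_0)=I$ on the nose---it suffices that $I_2(M_0)$ be a reduction of $I$ contained in $I$---and restricting attention to a well-chosen three-element reduction rather than a full generating set of $I$ is exactly what makes the minor bookkeeping finite and forced.
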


\begin{proof}
Let $a,b$ form a minimal reduction of $I$ and choose $c \in I \setminus (a,b)$ so that ${\mathfrak m}c \subseteq (a,b)$. This can be done since $I$ strictly contains $(a,b)$ and choosing a filtration of the quotient by copies of the residue field of $R$. Then the ideal $(a,b,c)$ is minimally 3-generated, since its colength in $R$ is strictly smaller than that of $(a,b)$, which is the multiplicity of $I$.

Consider a minimal free resolution of $(a,b,c)$ of the form
\begin{equation*}
\xymatrix@C+2pc{
0 \ar[r]
&
  R^2 \ar[r]^{\spmat{p & q \\ r & s \\ t & u}}
  &
  R^3 \ar[r]^{\spmat{a & b & c}}
  &
  (a,b,c) \ar[r]
  & 
  0.
}
\end{equation*}
Since ${\mathfrak m}c \subseteq (a,b)$, the ideal $(t,u) = {\mathfrak m}$. Let $M$ be the integral closure of the  module that is generated by the columns of the transposed presentation matrix
$$
\left[
\begin{array}{ccc}
p & r & t\\
q & s & u
\end{array}
\right],
$$
so that $M$ is an integrally closed module with ideal of maximal minors $\overline{(a,b,c)} = I$.
Further note that $I_1(M) = {\mathfrak m}$ since $(t,u) = {\mathfrak m}$.
\end{proof}

\begin{lemma}\label{decom} Let $M$ be a decomposable integrally closed module of rank two with $I_2(M) = I$ of order at least 3  and $I_1(M) = {\mathfrak m}$. Then
$M$ decomposes as ${\mathfrak m} \oplus J$ with $J = (I:{\mathfrak m})$ and $I = {\mathfrak m}J$.
\end{lemma}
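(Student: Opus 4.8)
The plan is to use the structure of decomposable rank-two modules to reduce the statement to one about the two integrally closed ideals appearing as summands, and then to pin these ideals down using the order function together with the Rees valuations.

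First I would record the shape of a decomposable $M$. Being torsion-free of rank two with a nontrivial decomposition, $M$ splits as a sum of two rank-one torsion-free modules; over the two-dimensional regular (hence factorial) ring $R$ each rank-one summand embeds in its bidual $R$ as an integrally closed ${\mathfrak m}$-primary ideal, and since the integral closure of a direct sum is the direct sum of the integral closures, I may write $M = I_1 \oplus I_2$ with $I_1, I_2$ integrally closed, sitting in the coordinate copies of $F = M^{**} \cong R^2$. Reading the invariants off a block-diagonal generating matrix gives $I_1(M) = I_1 + I_2$ and $I_2(M) = I_1 I_2$, so the hypotheses become $I_1 + I_2 = {\mathfrak m}$ and $I_1 I_2 = I$ with $ord(I) \geq 3$.

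Next I would exploit the order function, which is a valuation because $gr_{{\mathfrak m}}(R)$ is a domain. Additivity on products and the minimum rule on sums give $ord(I_1) + ord(I_2) = ord(I) \geq 3$ and $\min(ord(I_1), ord(I_2)) = ord({\mathfrak m}) = 1$. Hence, after relabelling, $ord(I_1) = 1$ and $ord(I_2) \geq 2$, so $I_2 \subseteq {\mathfrak m}^2$. The equation $I_1 + I_2 = {\mathfrak m}$ then forces $I_1 + {\mathfrak m}^2 = {\mathfrak m}$, i.e. the images of the generators of $I_1$ span ${\mathfrak m}/{\mathfrak m}^2$; choosing two of them produces a regular system of parameters contained in $I_1$, whence $I_1 = {\mathfrak m}$. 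Consequently $I = {\mathfrak m} I_2$, and $I_2 \subseteq (I : {\mathfrak m})$ is immediate, with $I = {\mathfrak m}J$ following once $I_2 = J$ is established.

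The remaining point, proving $(I : {\mathfrak m}) \subseteq I_2$, is where the integral-closedness of $I_2$ must enter, and I expect it to be the main obstacle. My plan is valuative: for each Rees valuation $v$ of $I_2$, choose $m_0 \in {\mathfrak m}$ with $v(m_0) = v({\mathfrak m})$; if $z \in ({\mathfrak m} I_2 : {\mathfrak m})$, then $m_0 z \in {\mathfrak m} I_2$ yields $v({\mathfrak m}) + v(z) \geq v({\mathfrak m}) + v(I_2)$, so $v(z) \geq v(I_2)$ for every such $v$, and since $I_2$ is integrally closed this places $z$ in $I_2$. This gives $I_2 = J = (I : {\mathfrak m})$ and $I = {\mathfrak m}J$, so $M = {\mathfrak m} \oplus J$ as claimed.
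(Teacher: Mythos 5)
Your proposal is correct and follows essentially the same route as the paper: split $M$ into two integrally closed ${\mathfrak m}$-primary ideal summands, read off $I_1(M)$ and $I_2(M)$ from a block matrix, use the order (valuation) to force one summand to be ${\mathfrak m}$, and then identify the other with $(I:{\mathfrak m})$ using its integral closedness. The only divergence is in the final inclusion $(I:{\mathfrak m}) \subseteq J$, where the paper invokes the determinant trick (from $z{\mathfrak m} \subseteq J{\mathfrak m}$ one gets $z$ integral over $J$) rather than your Rees-valuation computation; both are standard and equally valid.
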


\begin{proof} Suppose that $M$  decomposes as $K \oplus J$, where we may assume without loss of generality that $K$ and $J$
are ${\mathfrak m}$-primary ideals of $R$. Since $M$ is integrally closed, so are $K$ and $J$. Now $I_2(M) = KJ$ and $I_1(M) = K+J$ and so we have that $KJ = I$ and $K+J = {\mathfrak m}$. Since $I$ is of order at least 3, at least one of $K$ and $J$, say $J$,  must have order at least 2 and then, from $K+J = {\mathfrak m}$ we may conclude that $K$ must be ${\mathfrak m}$. So $M$ must decompose as ${\mathfrak m} \oplus J$ and so $I_2(M) = I = {\mathfrak m}J$.
Now, $J = (I:{\mathfrak m})$ necessarily. The containment $J \subseteq (I:{\mathfrak m})$ is clear and the other direction follows from the familiar ``determinant trick" since $J$ is integrally closed. Finally, $I = I_2({\mathfrak m} \oplus J) = {\mathfrak m}J$.
\end{proof}

\begin{proof}[Proof of Proposition \ref{order3}] 
By Lemma \ref{order2}, there is an integrally closed $R$-module $M$ of rank two with $I_2(M) = I$ and $I_1(M) = {\mathfrak m}$. If $M$ is indecomposable, we're done. Else, by Lemma \ref{decom}, $I = {\mathfrak m}J$ with
$J = (I : {\mathfrak m})$.
Now let $M^\prime$ be the module generated by the columns of the matrix
$$
\left[
\begin{array}{ccc}
J & y & 0\\
0 & x & J
\end{array}
\right],
$$
where $x,y$ are sufficiently general in the sense that $I$ is contracted from $R[\frac{{\mathfrak m}}{y}]$ and $(x,y) = {\mathfrak m}$. This contraction condition is equivalent to $(I:y) = (I:{\mathfrak m})$.

We see that $I(M^\prime) = {\mathfrak m}J + J^2 = I$ since $J^2 \subseteq {\mathfrak m}J = I$. Further, $M^\prime$ is integrally closed. This is an application of the equational criterion. Suppose that
$$
\left[
\begin{array}{ccc}
p\\
q
\end{array}
\right]
$$
is integral over $M^\prime$. Then $p$ is integral over $(y,J)$ while $q$ is integral over $(x,J)$. Since these are both ideals of order 1, they are integrally closed and so $p \in (y,J)$ and $q \in (x,J)$.
Thus, by subtracting suitable elements of $J$ from both $p$ and $q$, we may assume that this vector is of the form
$$
\left[
\begin{array}{ccc}
uy\\
vx
\end{array}
\right],
$$
for some $u,v \in R$.
Now, by the determinantal criterion $(u-v)xy \in I$ and so $(u-v)x \in (I:y) = (I:{\mathfrak m}) = J$. It follows that
$$
\left[
\begin{array}{ccc}
uy\\
vx
\end{array}
\right] = \left[
\begin{array}{ccc}
0\\
(v-u)x
\end{array}
\right] + u\left[
\begin{array}{ccc}
y\\
x
\end{array}
\right] \in M^\prime,
$$
as needed.

If $M^\prime$ is decomposable, again by Lemma \ref{decom}, it decomposes as ${\mathfrak m} \oplus J$. In particular, there is a homomorphism from $M^\prime$ to $R$  - namely projection to the first component - whose image is ${\mathfrak m}$. Regarding $M^\prime$ as a submodule of $F= (M^\prime)^{**}$, any homomorphism from $M^\prime$ to $R$ extends to one from $F$ to $R$ and so is of the form of left multiplication by a row vector, say $[s ~ t]$. Under
this homomorphism the image of $M^\prime$ is the ideal $sJ + (sy+tx) + tJ \subseteq (sy+tx,{\mathfrak m}^2)$. This ideal can clearly never be ${\mathfrak m}$, so $M^\prime$ must be indecomposable.
\end{proof}

Before moving on to non-existence results, we show one more sufficiency condition.

\begin{proposition}\label{otherorder2} Suppose that $I$ is an integrally closed ${\mathfrak m}$-primary ideal that is of order 2 and is either a simple integrally closed ideal or is a product of two simple integrally closed ${\mathfrak m}$-primary ideals, say $I=JK$ where $J+K \neq {\mathfrak m}$.
Then $I$ is the ideal of minors of an indecomposable integrally closed $R$-module of rank two.
\end{proposition}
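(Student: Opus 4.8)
The plan is to build a candidate module with the prescribed ideal of minors via Lemma~\ref{order2}, and then to show that the hypotheses on $I$ force this candidate to be indecomposable; the point is that the two hypotheses are each designed to violate one of the two numerical obstructions to decomposability.

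First, since $I$ is integrally closed, ${\mathfrak m}$-primary and of order $2$, Lemma~\ref{order2} produces an integrally closed $R$-module $M$ of rank two with $I_2(M) = I$ and $I_1(M) = {\mathfrak m}$. It suffices to prove that $M$ is indecomposable, so I would assume the contrary and extract the structural consequences --- precisely the portion of the proof of Lemma~\ref{decom} that never uses $ord(I) \geq 3$. Since a nonzero torsion-free summand has positive rank, the only genuine splitting of the rank-two module $M$ is into two rank-one modules, each of which is integrally closed (a direct summand of an integrally closed module is integrally closed) and hence isomorphic to an integrally closed fractional ideal that we may take to be integral. Writing $M \cong A \oplus B$ we get $I_2(M) = AB$ and $I_1(M) = A + B$, so $AB = I$ and $A + B = {\mathfrak m}$; and since $I = AB$ is ${\mathfrak m}$-primary neither factor can lie in a height-one prime, so $A$ and $B$ are proper ${\mathfrak m}$-primary integrally closed ideals. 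In short, decomposability of $M$ would exhibit $I$ as a product $AB$ of two proper integrally closed ideals satisfying $A + B = {\mathfrak m}$.

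Finally I would rule this out under each hypothesis using Zariski's factorisation theory. If $I$ is simple, then the factorisation $I = AB$ into two proper integrally closed ideals contradicts simplicity at once, so $M$ is indecomposable. If instead $I = JK$ with $J, K$ simple and $J + K \neq {\mathfrak m}$, then $A$ and $B$ are themselves products of simple integrally closed ideals, and by the uniqueness of factorisation into simples the multiset of simple factors of $AB = I$ is exactly $\{J, K\}$. As $A$ and $B$ are both proper, each contributes at least one simple factor, whence $\{A, B\} = \{J, K\}$ and therefore $A + B = J + K \neq {\mathfrak m}$, contradicting $A + B = {\mathfrak m}$. Thus $M$ is indecomposable in this case as well.

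I expect no serious obstacle: the whole argument rests on the order-independent structural reduction of Lemma~\ref{decom} together with unique factorisation of integrally closed ideals, and the role of the hypothesis $J + K \neq {\mathfrak m}$ (respectively simplicity) is exactly to block one of the two conditions --- factorability into proper integrally closed ideals, or those factors summing to ${\mathfrak m}$ --- that decomposability would require. The only place warranting a careful sentence is the identification $\{A, B\} = \{J, K\}$ from uniqueness of factorisation, including the degenerate case $J = K$.
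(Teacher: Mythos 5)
Your proposal is correct and follows essentially the same route as the paper: construct $M$ via Lemma~\ref{order2}, observe that a decomposition $M \cong A \oplus B$ forces $AB = I$ and $A+B = {\mathfrak m}$ with $A,B$ integrally closed and ${\mathfrak m}$-primary, and then invoke Zariski's unique factorisation to derive a contradiction in each case. Your write-up is somewhat more explicit about the simple case and about why $\{A,B\}=\{J,K\}$, but the argument is the same.
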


\begin{proof}
Consider the module $M$ constructed as in the proof of Lemma \ref{order2}. This is integrally closed of rank two with ideal of minors $I$ and $I_1(M) = {\mathfrak m}$. If $M$ decomposes as $P \oplus Q$ with $P$ and $Q$ being ${\mathfrak m}$-primary ideals that are necessarily integrally closed, then $I = PQ$ and $P+Q = {\mathfrak m}$. By Zariski's unique factorisation theorem, $P$ and $Q$ are necessarily $J$ and $K$ up to transposition. But this contradicts $J+K \neq {\mathfrak m}$. Hence $M$ is indecomposable.
\end{proof}

We will next prove the primary non-existence result of this note. This answers a question raised in \cite{Hys2020}.

\begin{proposition}\label{nonexis}
Let ${\mathfrak m} = (x,y)$ and suppose that $I = (x^m,xy,y^n)$ for $m,n \geq 2$. If $M$ is an integrally closed module of rank two without a free direct summand with $I(M) = I$, then $M$ is isomorphic to $(x,y^{n-1}) \oplus (x^{m-1},y)$.
In particular, $M$ is decomposable.
\end{proposition}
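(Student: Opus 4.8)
The plan is to prove the coordinate-free statement that $M$ becomes $A\oplus B$ once we choose a suitable basis of $F=M^{**}$, where $A=(x,y^{n-1})$ and $B=(x^{m-1},y)$. Note first that $AB=(x^m,xy,x^{m-1}y^{n-1},y^n)=(x^m,xy,y^n)=I$ (because $x^{m-1}y^{n-1}\in(xy)$ for $m,n\ge 2$) and that $A+B=\mathfrak m$; both $A$ and $B$ are simple integrally closed ideals of order $1$, so $I=AB$ is the Zariski factorisation of $I$. The engine of the proof is the following one-line consequence of the determinantal criterion: \emph{if, in some basis of $F$, every element of $M$ has first coordinate in $A$ and second coordinate in $B$ (that is, $M\subseteq A\oplus B$), then in fact $M=A\oplus B$.} Indeed, for $v=(v_1,v_2)\in A\oplus B$ and any $w=(w_1,w_2)\in M\subseteq A\oplus B$ we have $\det[\,v\mid w\,]=v_1w_2-v_2w_1\in AB=I$, so $v$ is integral over $M$ and hence lies in $M=\overline M$; this gives $A\oplus B\subseteq M$, and the reverse inclusion was assumed. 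Thus the whole problem reduces to producing a basis of $F$ in which $M\subseteq A\oplus B$.

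Toward finding such a basis I would first record the standard reductions. The numerical equation gives $\mu(M)=ord(M)+rk(M)=2+2=4$, and the absence of a free summand gives $M\subseteq\mathfrak m F$. Writing $M$ as the columns of a $2\times 4$ matrix with entries in $\mathfrak m$, let $P$ and $Q$ be the ideals generated by the first and second rows; since each maximal minor lies in $PQ$ we get $I\subseteq PQ$, and as $ord$ is the $\mathfrak m$-adic valuation this yields $ord(P)+ord(Q)=ord(PQ)\le ord(I)=2$. Neither row can be zero (else all minors vanish and $I_2(M)=0$), so $ord(P)=ord(Q)=1$. Next comes a leading-form normalisation: when $m,n\ge 3$ the degree-two part of $I$ in $\mathfrak m^2/\mathfrak m^3$ is spanned by $xy$, so the degree-two leading form of every maximal minor is a multiple of $xy$. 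Expanding these quadratic forms in terms of the linear parts of the matrix entries forces the $x$-coefficient vectors (resp.\ the $y$-coefficient vectors) of the two rows to be proportional, and the surviving nonzero $xy$-coefficient forces these two directions to be independent; after a constant ($GL_2$ over the residue field) change of basis this means every first-row entry has linear part a scalar multiple of $x$ and every second-row entry a scalar multiple of $y$, so that the row ideals point ``along $x$'' and ``along $y$'' respectively.

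The remaining, and main, difficulty is to upgrade this normalisation to an honest containment $M\subseteq A\oplus B$, and this is where the hypothesis $A+B=\mathfrak m$ is essential. The obstruction is genuine rather than formal: a module may satisfy all the hypotheses yet fail $M\subseteq A\oplus B$ on the nose (its first-row ideal can be $(x,y^2)$ rather than $A=(x,y^{n-1})$), and one must absorb the offending lower-order terms by a further, higher-order change of basis of the form $\spmat{1 & -f\\ -g & 1}$. A key simplification is that this is a \emph{finite} problem: since $A\supseteq\mathfrak m^{n-1}$ and $B\supseteq\mathfrak m^{m-1}$, we have $\mathfrak m^{N}F\subseteq A\oplus B$ for $N=\max(m-1,n-1)$, so only the behaviour of $M$ modulo $\mathfrak m^{N}F$ must be corrected, order by order. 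At each order the permissible correction is pinned down by combining the determinantal criterion with its row-wise (equational-criterion) consequence recalled above, using $I_2(M)=I$ exactly to control where the minors live; the factorisation $I=AB$ with $A+B=\mathfrak m$ is precisely what guarantees that the obstruction at each stage can be cleared. Once $M\subseteq A\oplus B$ is achieved, the reduction lemma of the first paragraph finishes the proof, and $M\cong A\oplus B$ is in particular decomposable. Finally, the boundary cases $m=2$ or $n=2$ (where the degree-two part of $I$ is larger and the normalisation is one-sided) are handled by the same scheme with $A$ or $B$ replaced by $\mathfrak m$, the doubly degenerate case $m=n=2$ being Lemma~\ref{msquare}.
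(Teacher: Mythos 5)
Your opening reduction is correct and is a nice way to organise the endgame: if in some basis of $F$ one has $M \subseteq A \oplus B$ with $A = (x,y^{n-1})$ and $B = (x^{m-1},y)$, then since $AB = I$ every determinant $v_1w_2 - v_2w_1$ with $v \in A\oplus B$ and $w \in M$ lies in $I$, so the determinantal criterion gives $A \oplus B \subseteq \overline{M} = M$ and hence $M = A\oplus B$. The preliminary normalisations (that $\mu(M)=4$, that $M\subseteq\mathfrak{m}F$, that both row ideals have order one, and that for $m,n\geq 3$ the linear parts of the two rows can be rotated to point along $x$ and along $y$) are also sound, and the last of these is close in spirit to the paper's analysis of the matrix of $y$-coefficients, whose $2\times 2$ minors are shown there to lie in $(I:y^2)=(x,y^{n-2})$.

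The gap is that the step you yourself label ``the remaining, and main, difficulty'' --- producing a basis in which $M \subseteq A\oplus B$ honestly, not merely to leading order --- is never carried out. The claim that ``the obstruction at each stage can be cleared'' because $I=AB$ with $A+B=\mathfrak{m}$ is precisely the content of the proposition and is asserted rather than proved; note that the identical order-by-order scheme must \emph{fail} when $I=JK$ with $J+K\neq\mathfrak{m}$ (those ideals do admit indecomposable rank-two modules by Proposition \ref{otherorder2}), so one has to exhibit exactly where $A+B=\mathfrak{m}$ enters, and your sketch does not. The paper does this work concretely: from the containment of the minors of the $y$-coefficient matrix in $(I:y^2)$ and the unit $xy$-coefficient of some maximal minor it extracts the column $(0,y)$ as an actual element of $M$ after a $GL(2,R)$ change of basis; it then adjoins $JF\subseteq M$ for $J=(I:\mathfrak{m})=(x^{m-1},xy,y^{n-1})$, uses $(I:y)=(x,y^{n-1})$ to clean the first row, uses that some $p_0$ is a unit together with $(I:x^2)=(x^{m-2},y)$ to clean the second row, and only then reads off $M=(x,y^{n-1})\oplus(x^{m-1},y)$. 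Some explicit version of this sequence (or an actual inductive verification of your ``obstruction clearing'') is needed; as written, your argument establishes only the easy inclusion $A\oplus B\subseteq M$ conditional on the hard one.
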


\begin{proof} If $m=n=2$, then Lemma \ref{msquare} finishes the proof so we may assume that at least one of $m,n$ is at least 3, and then by symmetry that $m \geq 3$.
Since $M$ has no free direct summand, if $F = M^{**}$, then $M \subseteq {\mathfrak m}F$.
Suppose that $M$ is minimally generated by the columns of the matrix
$$
A=\left[
\begin{array}{rlcc}
p_0x+p_1y & q_0x+q_1y & r_0x+r_1y & s_0x+s_1y\\
t_0x+t_1y & u_0x+u_1y & v_0x+v_1y & w_0x+w_1y
\end{array}
\right].
$$
We may freely perform row and column operations on this matrix and the results all
give modules isomorphic to $M$.

We begin by claiming that all $2 \times 2$ minors of the matrix 
$$
B=\left[
\begin{array}{rlcc}
p_1 & q_1 & r_1 & s_1\\
t_1 & u_1 & v_1 & w_1
\end{array}
\right]
$$
are contained in $(I:y^2) = (x,y^{n-2})$. To see this for the minor $p_1u_1 - q_1t_1$, note that the determinant of the first two columns of the original matrix is $(p_0u_0 - q_0t_0)x^2 + (p_0u_1+p_1u_0-q_0t_1-q_1t_0)xy + (p_1u_1 - q_1t_1)y^2$. Since this is contained in $I$, it follows that $(p_1u_1 - q_1t_1)y^2 \in (I,x) = (x,y^n)$ and so $p_1u_1 - q_1t_1 \in (x,y^n):y^2 = (x,y^{n-2}) = (I:y^2)$. A similar proof holds for the other minors. By the determinantal criterion, every column of $yB$ is in $M$.

Since $xy \in I$, the coefficient of $xy$ in some $2 \times 2$ minor of $A$ must be a unit. Hence some entry of $B$ is
a unit and the corresponding column of $B$  is the second column of a matrix, say $P$, in $GL(2,R)$. If $Q = P^{-1}$, then
every column of $yQB$ is in the module generated by the columns of $QA$. By choice of $Q$, one of the columns of 
$yQB$ is the vector
$$
\left[
\begin{array}{c}
0\\
y
\end{array}
\right].
$$
Hence we may assume that the vector above is in $M$.

Also, if $J = (I:{\mathfrak m}) = (x^{m-1},xy,y^{n-1})$ then by the determinantal criterion again, $JF \subseteq M$. Thus $M$ is generated by the columns of
$$
\left[
\begin{array}{ccccccc}
J & 0 & 0 & p_0x+p_1y & q_0x+q_1y & r_0x+r_1y & s_0x+s_1y\\
0 & J & y & t_0x+t_1y & u_0x+u_1y & v_0x+v_1y & w_0x+w_1y
\end{array}
\right],
$$
and hence also by the columns of
$$
\left[
\begin{array}{ccccccc}
J & 0 & 0 & p_0x+p_1y & q_0x+q_1y & r_0x+r_1y & s_0x+s_1y\\
0 & x^{m-1} & y & t_0x & u_0x & v_0x & w_0x
\end{array}
\right].
$$

All the elements of the first row must be in $(I:y) = (x,y^{n-1})$ and since $y^{n-1} \in J$, a matrix of the form
$$
\left[
\begin{array}{ccccccc}
J & 0 & 0 & p_0x & q_0x& r_0x& s_0x\\
0 & x^{m-1} & y & t_0x & u_0x & v_0x & w_0x
\end{array}
\right].
$$
has columns that generate $M$.

Since $xy \in I$ (and $x \notin J$ because $m \geq 3$) at least one of $p_0,q_0,r_0,s_0$ must be a unit. Assume without loss of generality that $p_0$ is a unit and then by column operations (and renaming) we may assume that $q_0,r_0,s_0$ vanish.  So $u_0,v_0,w_0 \in (I:x^2) = (x^{m-2},y)$ and so by more column operations, we may assume that $u_0x,v_0x,w_0x$ also all vanish. So $M$ is generated by the columns of a matrix of the form
$$
\left[
\begin{array}{ccccccc}
J & 0 & 0 & p_0x\\
0 & x^{m-1} & y & t_0x
\end{array}
\right].
$$
Finally by more row and column operations $M$ is generated by columns of a matrix of the form
$$
\left[
\begin{array}{ccccccc}
y^{n-1} & 0 & 0 & x\\
0 & x^{m-1} & y & 0
\end{array}
\right].
$$
and is therefore isomorphic to $(x,y^{n-1}) \oplus (x^{m-1},y)$, as needed.
\end{proof}

We conclude with the statement and proof of our main theorem.

\begin{theorem}
If $(R,{\mathfrak m})$ is a two-dimensional regular local ring with infinite residue field and $I$ is an integrally closed
${\mathfrak m}$-primary ideal of $R$, then $I$ is the ideal of minors of an indecomposable integrally closed $R$-module of rank two exactly when it satisfies one of the following conditions:
\begin{enumerate}
\item $ord(I) \geq 3$.
\item $ord(I) = 2$ and $I$ is a simple integrally closed ideal.
\item $ord(I) = 2$ and $I$ is a product of simple integrally closed ideals $J$ and $K$ with $J+K \neq {\mathfrak m}$.
\end{enumerate}
\end{theorem}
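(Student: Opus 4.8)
The plan is to treat the two implications separately, since the sufficiency of conditions (1)--(3) is already in hand. Condition (1) is exactly Proposition \ref{order3}, while conditions (2) and (3) are precisely the two hypotheses of Proposition \ref{otherorder2}. Thus nothing new is required for the ``if'' direction, and the entire content of the theorem lies in proving necessity.

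For necessity I would argue directly. Suppose $I = I(M)$ for some indecomposable integrally closed $M$ of rank two. As observed just before Lemma \ref{msquare}, indecomposability forces $M \subseteq \mathfrak{m}F$ and hence $ord(I) \geq 2$. If $ord(I) \geq 3$ we are in case (1), so I may assume $ord(I) = 2$. Here I invoke Zariski's unique factorisation theorem to write $I$ as a product of simple integrally closed ideals. The $\mathfrak{m}$-adic order is additive on products (the associated graded ring $gr_{\mathfrak{m}}(R)$ is a polynomial ring, hence a domain, so for $f,g$ of minimal order the leading form of $fg$ is the nonzero product of the leading forms), and every nontrivial factor has order at least one. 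Consequently an ideal of order two is either simple, landing us in case (2), or a product $I = JK$ of exactly two simple integrally closed ideals, each of order one. If $J + K \neq \mathfrak{m}$ we are in case (3). The only remaining possibility is $ord(I) = 2$ with $I = JK$, both factors simple of order one, and $J + K = \mathfrak{m}$; the crux is to rule this out.

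To dispose of that case I would first record the classification of order-one simple integrally closed $\mathfrak{m}$-primary ideals: up to a linear change of coordinates each is $(x, y^a)$ for some $a \geq 1$, the value $a=1$ recovering $\mathfrak{m}$. Indeed, an order-one ideal other than $\mathfrak{m}$ has a one-dimensional space of degree-one leading forms, spanned by some linear form that we may take to be $x$; passing to the discrete valuation ring $R/(x)$ identifies the ideal with $(\bar{y}^{a})$, and one absorbs the $x$-multiples back to obtain $(x, y^{a})$ exactly. Applying this to $J$ and $K$, the hypothesis $J + K = \mathfrak{m}$ forces their degree-one leading forms to be independent whenever neither factor is $\mathfrak{m}$ (if they were proportional, both $J$ and $K$ would lie in $(x) + \mathfrak{m}^2 \subsetneq \mathfrak{m}$), while if one factor is $\mathfrak{m}$ the equality $J+K=\mathfrak{m}$ holds automatically. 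In every subcase, after a linear automorphism of $R$ I may write $J = (x, y^{a})$ and $K = (x^{b}, y)$ with $a,b \geq 1$, and a direct multiplication, using $a,b \geq 1$ to absorb $x^{b}y^{a}$ into $(xy)$, yields $I = JK = (x^{m}, xy, y^{n})$ with $m = b+1 \geq 2$ and $n = a+1 \geq 2$. But Proposition \ref{nonexis} asserts that every integrally closed module of rank two without free summand and with this ideal of minors is decomposable, contradicting the indecomposability of $M$. Hence this final case cannot occur, and necessity follows.

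The step I expect to be the main obstacle is this last structural reduction: one must be certain that $J + K = \mathfrak{m}$, combined with Zariski factorisation, really pins $I$ down to the monomial normal form $(x^{m}, xy, y^{n})$ so that Proposition \ref{nonexis} applies. The delicate points are the classification of order-one simple complete ideals and the verification that the coordinate change needed is a genuine linear automorphism of $R$, so that the property of being the ideal of minors of an indecomposable module is preserved. Once the normal form is secured the contradiction is immediate.
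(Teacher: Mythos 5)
Your proposal is correct and follows essentially the same route as the paper: sufficiency is delegated to Propositions \ref{order3} and \ref{otherorder2}, and necessity reduces the one remaining case ($ord(I)=2$, $I=JK$ with $J,K$ simple and $J+K=\mathfrak{m}$) to the normal form $(x^m,xy,y^n)$ so that Proposition \ref{nonexis} applies. The only difference is that you spell out the ``a little thought shows'' step (the classification of order-one complete ideals as $(x,y^a)$ and the additivity of order), which the paper leaves to the reader.
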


\begin{proof} The sufficiency of the conditions (1), (2), (3) has been established in Proposition \ref{order3} and Proposition \ref{otherorder2}. As for necessity, let $I$ be an integrally closed ${\mathfrak m}$-primary ideal of $R$  satisfying none of (1), (2) and (3). Then either $ord(I) = 1$ - in which case is it clearly impossible for it to be the ideal of minors of an indecomposable integrally closed $R$-module of rank two - or $ord(I) = 2$ and $I$ is a product of simple integrally closed ideals $J$ and $K$ with $J+K = {\mathfrak m}$. In this case, $ord(J) = 1 = ord(K)$ and a little thought shows that $I$ must necessarily be of the form $(x^m,xy,y^n)$ for $m,n \geq 2$ and $x,y,$ being generators of ${\mathfrak m}$. Now appeal to Proposition \ref{nonexis} to see that $I$ is not the ideal of minors of an indecomposable integrally closed $R$-module of rank two.
\end{proof}

\section*{Acknowledgements}
The first named author was partially supported by JSPS KAKENHI Grant Number JP20K03535. The second named author would like to thank Srikanth Iyengar and Radha Mohan for reigniting his interest in integrally closed modules.

     \end{document}